\DeclarePairedDelimiter\floor{\lfloor}{\rfloor}
\DeclareMathOperator{\Binopdf}{Binopdf}
\DeclareMathOperator{\VEC}{vec}
\newcommand{\gr}{g^r}
\newcommand{\gd}{g^d}
\newcommand{\Cd}{c_d}
\newcommand{\Cr}{c_r}
\newcommand{\Exp}[1]{\mathbb{E}\left[ #1\right]}
\newcommand{\ID}[1]{ \mathbb{1} \left(#1 \right) }
\newcommand{\Prob}[1]{\mathbb{P} \left(#1\right)}      
\newcommand{\Compress}{\medmuskip=0mu
\thinmuskip=0mu
\thickmuskip=0mu}
\newtheorem{Theorem}{Theorem}
\newtheorem{Remark}{Remark}
\newtheorem{Lemma}{Lemma}
\newtheorem{Corollary}{Corollary}
\newtheorem{Problem}{Problem}
\title{\LARGE \bf
{Optimal  Dynamic Pricing for Binary Demands in Smart Grids: A  Fair  and Privacy-Preserving Strategy$^{\boldsymbol \ast}$}
}
\author{Jalal Arabneydi$^1$ and Amir G. Aghdam$^2$
\thanks{$^\ast$This work has been supported in part by the Natural Sciences and Engineering Research Council of Canada (NSERC) under Grant RGPIN-262127-17, and in part by Concordia University under Horizon Postdoctoral Fellowship.}  
\thanks{$^1$Jalal Arabneydi and $^2$Amir G. Aghdam are with the  Department of Electrical and Computer Engineering, 
        Concordia University, 1455 de Maisonneuve Blvd, Montreal, QC, Canada. 
        {\tt\small Email:jalal.arabneydi@mail.mcgill.ca} and
        {\tt\small Email:aghdam@ece.concordia.ca}}%
}
\begin{document}
\maketitle

\vspace*{-5cm}{\footnotesize{Proceedings of American Control Conference, 2018.}}
\vspace*{4.23cm}

\thispagestyle{empty}
\pagestyle{empty}

\begin{abstract}
Motivated by demand-side management in smart grids, a decentralized controlled  Markov chain formulation is proposed to model a homogeneous population of users with  binary demands (i.e., off or on). The binary demands  often arise in scheduling applications such as   plug-in hybrid vehicles. 
Normally,  an  independent service operator (ISO) has  a finite number of options  when it comes to providing the users with electricity.  The options  represent  various incentive means,  generation resources, and price profiles.  The objective of the ISO is to find optimal  options in order to keep the distribution of   demands close to a desired level (which varies with time, in general) by imposing the minimum price  on the users.   A Bellman equation is developed here to identify the globally team-optimal strategy. The proposed strategy is fair for all users and also  protects the privacy of  users. Moreover,  its computational complexity increases linearly (rather than exponentially)  with the number of users. A numerical example with 100 users is presented for peak-load management.
\end{abstract}
\section{Introduction}
The power grid is a complex multi-facet network consisting of various interconnected components such as  generators,  transmission lines, and consumers as well as  marketing rules,  etc.  While the power grid plays a key  role in today's quality of life, it increasingly faces serious challenges such as limited fuel-based energy resources, global warming, and the unpredictable nature  of renewable energy sources.  To  adopt to the growing limitations of the future world, the traditional power grid  needs to evolve to a smarter network with more efficient interactions between  its components.  To this end,  there has been a significant surge of interest in research and investment on  smart grids  in recent years~\cite{fang2012smart}.

One of the main challenges  in a smart grid is to  regulate the grid in the face of different types of volatility such as the uncertainty of renewable energy or the continuous variation of  daily load  pattern. One of the most common and effective means of addressing this challenge   is demand-side management, where  (shiftable and deferrable) demands  are intelligently re-distributed  to maintain the aggregate load  close to a desired load pattern.

Several models have been proposed  in  the literature for demand-side management. In  \cite{barbato2014optimization}, three different classifications are introduced for these models. The first one is  deterministic versus stochastic, where stochastic models prove more realistic as they can capture the uncertainty of the grid. The second one is  individual-based versus collaborative, where the collaborative  models  are more complex as they take into account the effect of each demand's decision on other demands. The third one is real-time pricing  versus day-ahead pricing, where day-ahead pricing structure, in addition to the current pricing, considers future pricing as well.   For more  details on  models for demand-side management,  the interested reader is referred to surveys \cite{barbato2014optimization,vardakas2015survey,deng2015survey}  and references therein.

One of the important requirements  in a smart grid is the privacy of information. This means  that  the status  of each  user  should not be shared with  others; consequently, only decentralized strategies are admissible in smart grids.  On the other hand, it is conceptually difficult to find  optimal decentralized strategies because  different users have different perception of   the system as their information is not necessarily the same. To  address this challenge,  the common practice   in the literature is 
to  first find the centralized solution and  then implement it in a distributed manner by using consensus-like algorithms under some convex assumptions.  In this type of approach,  only a weighted average of  the states of  other users is available to each user.  See for more details \cite{samadi2010optimal,dominguez2011distributed,sakurama2017communication} on static models and~\cite{tan2014optimal,chang2012coordinated} on dynamic models. 

In this paper, we propose a controlled Markov chain model for demand-side management for the case where  users have a  binary demand/no-demand state. 
This model falls into the class of stochastic, collaborative, and day-ahead pricing models. In general, binary demands  emerge  in various applications such as  plug-in electric vehicles~\cite{rotering2011optimal,foster2013optimal}, appliances scheduling~\cite{koutsopoulos2012optimal}, and pool pumps control~\cite{meyn2015ancillary}.  In contrast to the  papers cited in the previous paragrapgh, the dynamics,  price functions  and load pattern trajectory in this paper are described in discrete space (rather than continuous space).  Each user has access only to its local state (private information) and  the empirical distribution of demands (shared information called mean-field). In general, the  computational complexity  of problems with a non-classical information structure  described above  is NEXP~\cite{Bernstein2002complexity}.   Inspired by mean-field teams approach~\cite{arabneydi2016new,JalalCDC2014,JalalCDC2017}, we develop a Bellman equation to identify the globally  team-optimal solution.   It is shown that the computational complexity of the proposed solution increases linearly (rather than exponentially) with the number of users.

This paper is organized as follows. In Section~\ref{sec:problem}, the problem is formulated and  the main  challenges of finding the optimal solution are outlined. In Section~\ref{sec:theoretical},  a Bellman equation is derived to identify the optimal solution. In Section~\ref{sec:numerical},  the efficiency of the proposed solution is demonstrated by  numerical example  and in Section~\ref{sec:concolution}, the paper is concluded.

\subsection{Notation}
$\mathbb{N}$  and $\mathbb{R}$ refer to natural numbers and real numbers, respectively.  Given $k \in \mathbb{N}$, $\mathbb{N}_k$ denotes the finite set  $\{1,\ldots,k\}$ and  $x_{1:k}$ denotes vector $\VEC(x_1,\ldots,x_k)$. The probability, expectation, and indicator functions of an event  are respectively  denoted by $\Prob{\cdot}$,  $\Exp{\cdot}$, and  $\mathbb{1}(\cdot)$.  For any $n \in \mathbb{N}$ and $p \in [0,1]$,   $\Binopdf(\boldsymbol \cdot ,p,n )$ denotes the binomial probability distribution of $n$ trails with success probability~$p$.   

\section{Problem formulation}\label{sec:problem}
Consider an \emph{Independent Service Operator (ISO)}  regulating the power grid  by managing the demand side. Let $n \in \mathbb{N}$ denote the number of  users (consumers) in the grid.  At every time instant, each user requests power (makes a demand)  with probability $p \in (0,1)$. The demand processes of users are assumed to be mutually  independent. It is to be noted that  ISO does not necessarily serve (deliver) each demand immediately. In such a case, the corresponding user is not allowed to make a new demand  until after the existing  one is delivered by ISO.  Denote by $x^i_t \in \mathcal{X}:=\{0,1\}$ the state of user~$i \in \mathbb{N}_n$  at time~$t \in \mathbb{N}$,  where $x^i_t=1$ implies user~$i$ has a demand at time~$t$. Let $m_t \in \mathcal{M}:=\{0, \frac{1}{n}, \frac{2}{n},\ldots,1\}$ denote the empirical distribution of demands at time $t$, i.e., 
\begin{equation}\label{eq:mf}
m_t=\frac{1}{n}\sum_{i=1}^n \ID{x^i_t=1}.
\end{equation}
For ease of reference,  the empirical distribution of demands will hereafter be called \emph{mean-field}.

\subsection{Model}
Suppose ISO has $k \in \mathbb{N}$  options to provide  power  to  users, where each option has its unique  price,  participation, and delivery rate. The options may represent different price profiles,  incentives,  and generation resources.  Define the following for  every option $u \in \mathbb{N}_k$.

\emph{1) Participation rate}  is the probability according to which  a user is incentivized to delay its demand at any  time instant $t \in \mathbb{N}$. The ISO offers monetary discounts to users to incentivize  them to delay their demands.   Denote  the participation rate\footnote{In practice, the participation rate $\alpha(u)$ may depend on the monetary discount and the monetary discount  may   indirectly depend on $m_t$. As a result, the participation rate may be a function of $m_t$,  and hence be denoted as $\alpha(u,m_t)$.  For simplicity of presentation,  this indirect dependence is ignored in this paper; however, our results hold for  such  case as well.} by $\alpha(u) \in [0,1]$, and assume that it  is  independent of the demands. Hence, the probability of a demand being made by a participant user  is $ (1-\alpha(u)) p \leq p$. 

\emph{ 2) Delivery rate}  is the probability according to which  the demand of a user  is delivered  at any time $t \in \mathbb{N}$.  Since ISO often uses multiple   sources of energy  with different time responses and capacities,  it is possible that the rate of  delivery changes based on the type of power generation  source.  Denote the delivery rate  by $q(u, m_t) \in (0,1]$. Since the delivery rate may decrease as the the number of demands increases,  we use term $m_t$ to  denote this dependence.

\emph{3) Reserve price} is the price that a user with no demand $(i.e., x=0)$ pays for reliable energy (i.e., for the maximum power guaranteed by the ISO). At any time instant $t \in \mathbb{N}$,  ISO  reserves energy for the next time instant to reduce the chances of a power outage (e.g., in case of an upcoming unexpectedly high demand).  Each user must pay this price even if  it does not use  the electricity.   Denote the reserve price by~$\Cr(u, 1-m_t) \in \mathbb{R}_{\geq 0}$.\footnote{Since the reserve  price  may vary  based on the size of capacity (i.e., $n(1-m_t)$), we use term $(1-m_t)$ to denote this dependence.}

\emph{4) Demand price}  is the price that a user with demand $(i.e., x=1)$ pays  for energy. This includes the price of energy as well as  the price of the reserved capacity of the next time instant. It is to be noted that  users with demand can use an option different from  users with no demand. This flexibility enables ISO to manage its resources more efficiently. Denote the demand price by $\Cd(u, m_t) \in \mathbb{R}_{\geq 0}$.\footnote{Similar to the reserve price,  the demand price  may vary  based on the size (i.e., $n m_t $); hence,  we use term $m_t$ to denote this dependence.}  
\begin{Remark}
\emph{Notice  that $\alpha(u)=1$  and    $q(u,m_t)=1$ mean full participation and  instant delivery, respectively. When the reserve price  $\Cr(u,1-m_t)$  and  demand price  $\Cd(u,m_t)$  are independent of $m_t$, the price  is said to be flat.}
\end{Remark}
Let  $u^i_t \in \mathbb{N}_k$ denote  the option assigned to user $i \in \mathbb{N}_n$ at time $t \in \mathbb{N}$.   Then, given $m_t \in \mathcal{M}$, the transition probability matrix of user $i $  at time $t $ can be written as follows:
\begin{equation}\label{eq:dynamics}
\Prob{x^i_{t+1} | x^i_t, u^i_t,m_t}= \left[ 
\begin{array}{c c}
1- (1-\alpha(u^i_t)) p & (1- \alpha(u^i_t))p\\
q(u^i_t,m_t) & 1- q(u^i_t,m_t)
\end{array}
\right].
\end{equation}
In addition,  the price that user $i$ pays at time $t$  can be expressed as:
\begin{multline}\label{eq:per-step}
c(x^i_t,u^i_t,m_t)=\ID{x^i_t=0}\Cr(u^i_t,1-m_t)\\+\ID{x^i_t=1}\Cd(u^i_t,m_t).
\end{multline}

\subsection{Strategy}
 Any admissible strategy employed by the  ISO must satisfy the following two conditions:
\begin{itemize}
\item They must be ``fair" in the sense that no user is privileged or  discriminated in the grid.

\item  They must be designed  in such a way that  the  ``privacy"  of users is protected  so that no third-party or ISO can know the  status of each user.
\end{itemize}
Inspired by the above  features, we consider the following control law:
\begin{equation}\label{eq:info-structure}
u^i_t=g_t(x^i_t,m_{1:t}),
\end{equation}
where $g_t: \mathcal{X} \times \mathcal{M}^t \rightarrow \mathbb{N}_k$, $t \in \mathbb{N}$. The collection of control laws $g_t$ is called the 
\emph{strategy} of system, i.e., $g:=\{g_1,g_2, \ldots\}$. This strategy is fair because if two  users have the same state, their control actions will be identical. In addition,   under this strategy,  the state of user $i$, i.e. $x^i_t$, is  known to  neither ISO nor other users.

Each user is equipped with a smart meter which enables the user to  control its demand and communicate with  the ISO and other users.   There are several ways to share  mean-field $m_t$ among users.  For instance, ISO  can compute the mean-field  by collecting the states of users in an anonymous fashion and then  broadcast the mean-field to all users. Another way is  to run a  proper consensus algorithm, e.g.,~ \cite{Olfati2006belief,Bishop2014},  in the network in  order to share the mean-field among users in a distributed manner, independently of  ISO. 

\begin{Remark}
\emph{When the number of users is asymptotically large, the mean-field becomes deterministic, hence predictable. In such case,  there is no need to even share  the mean-field~\cite{JalalCDC2017}.}
\end{Remark}

\subsection{Optimization problem}
Let $\theta_t \in [0,1]$ denote the trajectory of  the desired  load pattern at time $t \in \mathbb{N}$, and  $s_t \in \mathcal{S}$ denote  its state space representation such that
\begin{align}\label{eq:theta}
s_{t+1}&=f(s_t), \nonumber\\
\theta_t&=h(s_t),
\end{align}
where $f: \mathcal{S} \rightarrow \mathcal{S}$  and $h: \mathcal{S} \rightarrow [0,1]$.   The ISO provides  users with   initial state $s_1$ and functions $f$ and $h$.

The ISO ideally looks for the optimal option  to keep  the distribution of demands close to a desired load trajectory by imposing minimum price on users.  Given discount factor $\beta \in (0,1)$,  define the following cost function:
\begin{equation}\label{eq:total cost}
J(g)= \mathbb{E}^g\left[\sum_{t=1}^\infty \beta^{t-1} \frac{1}{n}\left( \sum_{i=1}^n c(x^i_t,u^i_t,m_t) + D(m_t,\theta_t) \right) \right],
\end{equation}
where $D: \mathcal{M} \times [0,1] \rightarrow \mathbb{R}_{\geq 0}$ denotes  an arbitrary distance function.  Note that the above expectation function depends on the  choice of strategy~$g$. 
\begin{Problem}\label{prob}
Find the optimal strategy~$g^\ast$ such that for every strategy $g$,
\begin{equation}
J(g^\ast) \leq J(g).
\end{equation}
\end{Problem}

Since the information structure, given by \eqref{eq:info-structure},  is non-classical, 
  every user has different information (perception) about the system and such discrepancy  makes it difficult to establish cooperation among users.  In general, the computational complexity of  Problem~\ref{prob}  increases exponentially with  the number of users and  control horizon~\cite{Bernstein2002complexity},  which  makes Problem~\ref{prob} computationally expensive  even  for a relatively small number of users.   This complexity is exasperated  when the control horizon is infinity.
 
\section{Main results}\label{sec:theoretical}
In this section, we present a solution to Problem~\ref{prob}. Given  realization $m_{1:t} \in \mathcal{M}^t, t \in \mathbb{N}$,  define \emph{reserve action} $\gr_t \in \mathbb{N}_k$  for users with no demand as follows:
\begin{equation}\label{eq:gr}
\gr_t:= g_t(0, m_{1:t}).
\end{equation}
Similarly, for users with demand define  \emph{demand action}  $\gd_t \in  \mathbb{N}_k$ as:
\begin{equation}\label{eq:gd}
\gd_t:= g_t(1, m_{1:t}).
\end{equation}
Equations \eqref{eq:info-structure}, \eqref{eq:gr}, and \eqref{eq:gd}  yield
\begin{equation}\label{eq:g-u}
u^i_t=\ID{x^i_t=0}\gr_t +\ID{x^i_t=1}\gd_t.
\end{equation}
We show that the stochastic process $m_{1:t}$ evolves in a Markovian manner under the reserve and demand actions.  This implies there is no loss of optimality in dropping $m_{1:t-1}$.
\begin{Theorem}\label{thm:markov-mf}
For any $m_{1:t} \in \mathcal{M}^t$,  $ \gr_{1:t}   \in (\mathbb{N}_k)^t$, and $\gd_{1:t} \in (\mathbb{N}_k)^t$, $t \in \mathbb{N}$, the following equality holds:
\begin{equation}
\Prob{m_{t+1}| m_{1:t}, \gr_{1:t}  ,\gd_{1:t} }=\Prob{m_{t+1}| m_t, \gr_t,\gd_t}.
\end{equation}
\end{Theorem}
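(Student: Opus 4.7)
The plan is to show that, conditionally on the entire past, the count $n m_{t+1}$ decomposes as a sum of two conditionally independent binomial random variables whose parameters depend only on $(m_t, \gr_t, \gd_t)$. This will yield the Markov property for $m_t$ directly.

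First, I would condition on the full user-level state vector $x^{1:n}_t$. Under the control law \eqref{eq:g-u}, each user's action $u^i_t$ is a deterministic function of $(x^i_t, \gr_t, \gd_t)$; and by the transition model \eqref{eq:dynamics} together with the standing independence of the per-user primitive noise, the next-period states $\{x^i_{t+1}\}_{i=1}^n$ are mutually conditionally independent given $(x^{1:n}_t, \gr_t, \gd_t, m_t)$, with individual laws that depend on the past only through $(x^i_t, u^i_t, m_t)$.

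Next, I would split the sum defining $m_{t+1}$ according to whether $x^i_t = 0$ or $x^i_t = 1$:
\begin{equation*}
n m_{t+1} = \sum_{i:\, x^i_t = 0} \ID{x^i_{t+1} = 1} \;+\; \sum_{i:\, x^i_t = 1} \ID{x^i_{t+1} = 1} \;=:\; Y_1 + Y_2.
\end{equation*}
Because every user in the first sum uses action $\gr_t$ and flips to state $1$ independently with probability $(1-\alpha(\gr_t))p$, we get $Y_1 \sim \Binopdf(\boldsymbol\cdot, (1-\alpha(\gr_t))p, n(1-m_t))$, and analogously $Y_2 \sim \Binopdf(\boldsymbol\cdot, 1 - q(\gd_t, m_t), n m_t)$, with $Y_1$ and $Y_2$ conditionally independent. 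Crucially, both parameter pairs depend on $x^{1:n}_t$ only through the count $m_t$, not on which particular users are in state~$1$.

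The final step is to marginalize over $x^{1:n}_t$: since the conditional law of $m_{t+1}$ just derived depends on $x^{1:n}_t$ only through $m_t$ and on the earlier history only through $(m_t, \gr_t, \gd_t)$, the tower property produces the claimed identity, with the resulting distribution equal to the convolution of the two binomials above. The main subtlety — and the step I expect to be the technical bottleneck — is making rigorous the conditional independence of $\{x^i_{t+1}\}_i$ given $(x^{1:n}_t, \gr_t, \gd_t, m_t)$, i.e.\ verifying that the Bernoulli primitives driving \eqref{eq:dynamics} are independent of the $\sigma$-algebra generated by $(m_{1:t-1}, \gr_{1:t-1}, \gd_{1:t-1})$ once $(x^{1:n}_t, m_t)$ is conditioned upon. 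Once this model-level independence is in place, the symmetry of the users and the counting structure of $m_t$ make the reduction automatic.
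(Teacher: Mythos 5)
Your proposal is correct and follows essentially the same route as the paper: condition on the full user-level history, use the conditional independence of the per-user transitions to write $n m_{t+1}$ as a convolution of Bernoulli laws split by $x^i_t=0$ versus $x^i_t=1$ (equivalently, the sum of two conditionally independent binomials with parameters $(1-\alpha(\gr_t))p$ and $1-q(\gd_t,m_t)$), observe that this law depends on the past only through $(m_t,\gr_t,\gd_t)$, and marginalize. The paper's proof of Theorem~\ref{thm:markov-mf} does exactly this, deferring the explicit binomial identification to Theorem~\ref{thm:bino}.
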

\begin{proof}
From~\eqref{eq:mf},
\begin{equation}\label{eq:proof-next-mf}
n\cdot m_{t+1}=\sum_{i=1}^n \ID{x^i_{t+1}=1}.
\end{equation}
Suppose every agent has access to  $\{x^1_{1:t}, \ldots, x^n_{1:t}\}$. Then,  $\{\ID{x^i_{t+1}=1}\}_{ i \in \mathbb{N}_n}$ are independent  Bernoulli random variables  with identical probability distribution according to \eqref{eq:dynamics}, i.e., $\Prob{x^i_{t+1}=1 \mid x^i_t, u^i_t, m_t}$ where  $u^i_t$ and $m_t$ are given by \eqref{eq:g-u} and \eqref{eq:mf}, respectively.  Since   \eqref{eq:proof-next-mf} is  a summation of $n$ independent random  variables, its   probability distribution  can be expressed as the convolution of  the probability distributions of  these $n$ random variables~\cite{grinstead2012introduction}. In particular,  $n(1-m_t)$ of the  random variables  have   state zero at time~$t$, i.e. $x^i_t=0$, and the remaining  $n m_t$  of them  have state 1, i.e. $x^i_t=1$.  Therefore,   the following yields from \eqref{eq:proof-next-mf}:
\begin{multline}\label{eq:proof-x-m}
\Prob{n \cdot m_{t+1} \mid x^1_{1:t}, \ldots, x^n_{1:t}, \gr_{1:t},\gd_{1:t} }=  \\  \underbrace{\Prob{1 \mid 0, \gr_t, m_t} \ast \ldots \ast \Prob{1 \mid 0, \gr_t, m_t}}_{n(1-m_t)} \\
 \ast\underbrace{\Prob{1 \mid 1, \gd_t, m_t} \ast \ldots \ast \Prob{1 \mid 1, \gd_t, m_t}}_{nm_t}.
\end{multline}
On the one hand, the history of mean-field $m_{1:t}$ is a function of the history of joint state $x^1_{1:t}, \ldots, x^n_{1:t}$  according to  \eqref{eq:mf}, and on the other hand, the right-hand side of \eqref{eq:proof-x-m} depends  only on $m_t, \gr_t,$ and $\gd_t$. Thus,  it follows from  \eqref{eq:proof-x-m}  that
\begin{equation}
\Prob{m_{t+1}| m_{1:t}, \gr_{1:t}  ,\gd_{1:t} }=\Prob{m_{t+1}| m_t, \gr_t,\gd_t}.
\end{equation}
\end{proof}
\begin{Remark}
\emph{It is to be noted that the mean-field process $ m_{1:t}$ is not necessarily a controlled Markov process under joint actions $\{ u^1_{1:t}, \ldots,u^n_{1:t} \}$, i.e.,
\begin{equation}
\Prob{m_{t+1}| m_{1:t},  u^1_{1:t}, \ldots,u^n_{1:t}} \neq \Prob{m_{t+1}| m_t,  u^1_{t}, \ldots,u^n_{t}}.
\end{equation}
 }
\end{Remark}
\begin{Theorem}\label{thm:bino}
The transition probability matrix of the mean-field can be expressed  in terms of binomial probability distributions as follows. Given any $\hat m \in \mathcal{M}$,
\begin{itemize}
\item for $m_t=0$, 
\begin{equation}
\Compress
\Prob{m_{t+1}=\hat m| m_t, \gr_t,\gd_t}=\Binopdf\big(n\hat m, (1-\alpha(\gr_t))p , n\big),
\end{equation}
\item for $m_t=1$, 
\begin{equation}
\Compress
\Prob{m_{t+1}=\hat m| m_t, \gr_t,\gd_t}=\Binopdf\big(n\hat m, 1-q(\gd_t,m_t) , n \big),
\end{equation}
\item  for $m_t \notin \{0,1\} $, 
\begin{align}
&\Prob{m_{t+1}=\hat m| m_t, \gr_t,\gd_t}=\\
& \Big(\Binopdf\big(\boldsymbol \cdot, (1-\alpha(\gr_t))p , n (1-m_t)\big) \\
& \hspace{1.6cm}\ast \Binopdf\big(\boldsymbol \cdot, 1-q(\gd_t,m_t) , n m_t\big)\Big)(n\hat m).
\end{align}
\end{itemize}
\end{Theorem}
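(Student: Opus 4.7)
The plan is to piggyback on equation~\eqref{eq:proof-x-m} from the proof of Theorem~\ref{thm:markov-mf}, which already expresses the conditional distribution of $n m_{t+1}$ as a convolution of $n(1-m_t)$ copies of $\Prob{1 \mid 0, \gr_t, m_t}$ with $n m_t$ copies of $\Prob{1 \mid 1, \gd_t, m_t}$. Once that is in hand, all that remains is (i)~identifying the two Bernoulli success probabilities from the transition matrix~\eqref{eq:dynamics}, and (ii)~recognizing that a sum of i.i.d. Bernoulli$(p')$ random variables has a Binomial$(\cdot,p',N)$ distribution.

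First I would read off the parameters. By~\eqref{eq:dynamics}, a user in state $0$ jumps to state $1$ with probability $(1-\alpha(u^i_t))p$, and a user in state $1$ remains in state $1$ with probability $1-q(u^i_t,m_t)$. Combining with~\eqref{eq:g-u}, a user in state $0$ uses option $\gr_t$ and a user in state $1$ uses option $\gd_t$, so the two relevant Bernoulli parameters are $(1-\alpha(\gr_t))p$ and $1-q(\gd_t,m_t)$, respectively.

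Then I would split into the three cases of the theorem. If $m_t=0$, every user is in state $0$, so $n m_{t+1}$ is the sum of $n$ i.i.d. Bernoulli$((1-\alpha(\gr_t))p)$ variables, which yields the first claim. If $m_t=1$, every user is in state $1$, so $n m_{t+1}$ is the sum of $n$ i.i.d. Bernoulli$(1-q(\gd_t,m_t))$ variables, which yields the second claim. For $m_t \notin \{0,1\}$, the $n(1-m_t)$ users starting in state $0$ contribute an independent Binomial$(\boldsymbol\cdot,(1-\alpha(\gr_t))p,n(1-m_t))$ summand and the $n m_t$ users starting in state $1$ contribute an independent Binomial$(\boldsymbol\cdot,1-q(\gd_t,m_t),n m_t)$ summand; the pmf of the sum is the discrete convolution of the two pmfs evaluated at $n\hat m$.

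The main obstacle is essentially bookkeeping rather than anything conceptual: I need to check that the convolution inherited from~\eqref{eq:proof-x-m} collapses correctly at the boundary values $m_t\in\{0,1\}$ (so that no degenerate Binomial$(\boldsymbol\cdot,\cdot,0)$ factor is silently introduced), and that the discrete convolution is taken over nonnegative integers so that evaluation at $n\hat m\in\{0,1,\ldots,n\}$ produces a genuine pmf on $\mathcal{M}$. Beyond that, the argument is a direct specialization of Theorem~\ref{thm:markov-mf}.
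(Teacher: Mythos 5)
Your proposal is correct and follows essentially the same route as the paper: both start from the convolution identity \eqref{eq:proof-x-m} in the proof of Theorem~\ref{thm:markov-mf}, read off the success probabilities $(1-\alpha(\gr_t))p$ and $1-q(\gd_t,m_t)$ from \eqref{eq:dynamics} and \eqref{eq:g-u}, and invoke the fact that an $N$-fold convolution of identical Bernoulli pmfs is a Binomial$(\boldsymbol\cdot,p',N)$ pmf. Your explicit treatment of the boundary cases $m_t\in\{0,1\}$ is a minor bookkeeping refinement that the paper leaves implicit.
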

\begin{proof}
The proof follows from the interesting connection between convolution power and binomial distribution~\cite[chapter 7.1]{grinstead2012introduction}.  The first group of convolutions in  \eqref{eq:proof-x-m}  can be interpreted as the probability distribution of $n(1-m_t)$  Bernoulli random variables (trials) with success probability $\Prob{1 \mid 0, \gr_t,m_t}=(1-\alpha(\gr_t))p$. Similarly, the second group of convolutions in \eqref{eq:proof-x-m} can be interpreted as the probability distribution of $n m_t$ trials with success probability $\Prob{1 \mid 1, \gd_t,m_t}= 1-q(\gd_t,m_t)$. 
\end{proof}
\begin{Lemma}\label{lemma:per-step}
For any $m_t \in \mathcal{M}$, $s_t \in \mathcal{S}$, $\gr_t \in \mathbb{N}_k$, and $\gd_t \in \mathbb{N}_k$,  $t \in \mathbb{N}$,  there exists a function  $\ell$ such that
\begin{equation}
\ell(m_t,s_t, \gr_t,\gd_t)=\frac{1}{n}\left( \sum_{i=1}^n c(x^i_t,u^i_t,m_t) + D(m_t,\theta_t) \right).
\end{equation} 
\end{Lemma}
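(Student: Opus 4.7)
The plan is to show that each term in the bracketed sum depends on the joint state $(x^1_t,\ldots,x^n_t)$ only through the statistics $m_t$ once the actions are expressed in terms of $\gr_t$ and $\gd_t$. The key substitution is the control law \eqref{eq:g-u}, which forces $u^i_t$ to take only two values depending on $x^i_t$.

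First I would plug \eqref{eq:g-u} into \eqref{eq:per-step}. Because the indicator $\ID{x^i_t=0}$ multiplies a term evaluated at $u^i_t=\gr_t$ and $\ID{x^i_t=1}$ multiplies a term evaluated at $u^i_t=\gd_t$, the per-user cost simplifies to
\begin{equation}
c(x^i_t,u^i_t,m_t)=\ID{x^i_t=0}\Cr(\gr_t,1-m_t)+\ID{x^i_t=1}\Cd(\gd_t,m_t),
\end{equation}
which no longer depends on $i$ through the action arguments but only through the indicators.

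Next I would sum over $i\in\mathbb{N}_n$ and use the definition \eqref{eq:mf} of the mean-field, noting that $\sum_{i=1}^n\ID{x^i_t=1}=nm_t$ and $\sum_{i=1}^n\ID{x^i_t=0}=n(1-m_t)$. Dividing by $n$ yields
\begin{equation}
\frac{1}{n}\sum_{i=1}^n c(x^i_t,u^i_t,m_t)=(1-m_t)\Cr(\gr_t,1-m_t)+m_t\Cd(\gd_t,m_t),
\end{equation}
which is a deterministic function of $(m_t,\gr_t,\gd_t)$.

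Finally, since $\theta_t=h(s_t)$ by \eqref{eq:theta}, the tracking term $D(m_t,\theta_t)=D(m_t,h(s_t))$ depends only on $(m_t,s_t)$. Combining the two contributions, one can define
\begin{equation}
\ell(m_t,s_t,\gr_t,\gd_t):=(1-m_t)\Cr(\gr_t,1-m_t)+m_t\Cd(\gd_t,m_t)+\tfrac{1}{n}D(m_t,h(s_t)),
\end{equation}
which is the desired function. There is no genuine obstacle here; the statement is essentially a bookkeeping consequence of the fair control law~\eqref{eq:g-u} together with the linear-in-indicator structure of the per-step cost~\eqref{eq:per-step}. The only point that deserves emphasis in the write-up is that the identification of $\sum_i \ID{x^i_t=1}$ with $n m_t$ is exact (not merely in expectation), so $\ell$ is truly deterministic in its arguments.
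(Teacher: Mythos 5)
Your proof is correct and follows essentially the same route as the paper's: substitute the fair control law \eqref{eq:g-u} into \eqref{eq:per-step}, convert the indicator sums to $n m_t$ and $n(1-m_t)$ via \eqref{eq:mf}, and absorb $\theta_t=h(s_t)$ into the $s_t$ argument. The only difference is that you retain the factor $\tfrac{1}{n}$ on the $D$ term, which is actually the reading consistent with the parenthesization in the lemma statement and in \eqref{eq:total cost}, whereas the paper's own derivation silently drops it; this affects only the relative weighting of the tracking term, not the existence claim being proved.
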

\begin{proof}
The proof follows from \eqref{eq:per-step}, \eqref{eq:theta} and \eqref{eq:g-u} as shown below
\begin{align*}
&\frac{1}{n}\left( \sum_{i=1}^n c(x^i_t,u^i_t,m_t) + D(m_t,\theta_t) \right)\\
&=\sum_{x \in \mathcal{X}} \left(  \frac{1}{n}  \sum_{i=1}^n \ID{x^i_t=x}c(x^i_t,u^i_t,m_t) \right)+ D(m_t,\theta_t)\\
&=(1-m_t)\Cr(\gr_t,1-m_t)+m_t \Cd(\gd_t,m_t)+D(m_t,h(s_t))\\
&=:\ell(m_t,s_t, \gr_t,\gd_t).
\end{align*}
\end{proof}
\begin{Theorem}\label{thm:bellman}
For any $m \in \mathcal{M}$ and $s \in \mathcal{S}$,  define the following Bellman equation
\begin{multline}\label{eq:value-inf}
V(m, s):=\min_{\gr \in \mathbb{N}_k, \gd \in \mathbb{N}_k}\Big(\ell(m,s,\gr,\gd)+\\   \beta \sum_{\hat m \in \mathcal{M}} \Prob{\hat m|m,\gr,\gd} V(\hat m, f(s))\Big).
\end{multline}
Let $(\mathfrak{g}^{r,\ast}(m,s),\mathfrak{g}^{d,\ast}(m,s))$ denote a minimizer of \eqref{eq:value-inf}.  Then, the optimal control law for Problem~\ref{prob} is given by
\begin{equation}\label{eq:optimal_action}
u^{i,\ast}_t=\ID{x^i_t=0}\mathfrak{g}^{r,\ast}(m_t,s_t)+\ID{x^i_t=1}\mathfrak{g}^{d,\ast}(m_t,s_t).
\end{equation}
\end{Theorem}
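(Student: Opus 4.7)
The plan is to reduce Problem~\ref{prob} to a standard fully-observed infinite-horizon discounted Markov decision process (MDP) whose state is the pair $(m_t,s_t)$ and whose action is the pair $(\gr_t,\gd_t)$, after which the claim follows from the usual dynamic-programming principle.

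First, I rewrite the objective in a form that depends on the user histories only through the mean-field. Any admissible strategy $g$ satisfying \eqref{eq:info-structure} induces, via \eqref{eq:gr}--\eqref{eq:gd}, a pair of processes $(\gr_t,\gd_t)$ that are measurable functions of $m_{1:t}$ alone; conversely, every such pair of functions defines an admissible $g$ through \eqref{eq:g-u}. By Lemma~\ref{lemma:per-step} the per-step team cost equals $\ell(m_t,s_t,\gr_t,\gd_t)$, so
\begin{equation*}
J(g)=\mathbb{E}^{g}\Bigl[\sum_{t=1}^{\infty}\beta^{t-1}\ell(m_t,s_t,\gr_t,\gd_t)\Bigr],
\end{equation*}
and Problem~\ref{prob} is equivalent to optimizing over the reduced control sequence $\{(\gr_t,\gd_t)\}_{t\geq 1}$.

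Second, I establish the controlled Markov structure of the reduced state. Theorem~\ref{thm:markov-mf} supplies $\Prob{m_{t+1}\mid m_{1:t},\gr_{1:t},\gd_{1:t}}=\Prob{m_{t+1}\mid m_t,\gr_t,\gd_t}$, with the transition kernel made explicit by Theorem~\ref{thm:bino}. Since $s_{t+1}=f(s_t)$ evolves deterministically and exogenously, the augmented process $(m_t,s_t)$ is a controlled Markov chain with respect to $(\gr_t,\gd_t)$, and the per-step cost is a function of $(m_t,s_t,\gr_t,\gd_t)$ only. With a finite action space $\mathbb{N}_k\times\mathbb{N}_k$, a nonnegative cost, and $\beta\in(0,1)$, standard results for discounted MDPs then yield the Bellman recursion \eqref{eq:value-inf}, its unique bounded solution $V$, and a stationary Markovian minimizer $(\mathfrak{g}^{r,\ast},\mathfrak{g}^{d,\ast})$ attaining the optimum. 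Substituting this minimizer into \eqref{eq:g-u} gives exactly \eqref{eq:optimal_action}.

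The main obstacle is the justification of the first step: a priori one might fear that the non-classical information structure \eqref{eq:info-structure}, together with the requirement that policies be fair and privacy-preserving, leaves room for the local histories $x^i_{1:t-1}$ to matter. The resolution is that the admissibility constraint already forces every user's action to be a common function of its current local state and the shared mean-field history, so that choosing $g$ is equivalent to choosing $(\gr_t,\gd_t)$ as a function of $m_{1:t}$; Theorem~\ref{thm:markov-mf} and Lemma~\ref{lemma:per-step} then eliminate the dependence on $m_{1:t-1}$ without loss of optimality. Once this reduction is in place, the remainder is a routine application of dynamic programming.
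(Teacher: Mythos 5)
Your proposal is correct and follows essentially the same route as the paper: both reduce Problem~\ref{prob} to a fully observed controlled Markov chain with state $(m_t,s_t)$ and action $(\gr_t,\gd_t)$ via Theorem~\ref{thm:markov-mf}, the deterministic dynamics \eqref{eq:theta}, and Lemma~\ref{lemma:per-step}, and then invoke discounted dynamic programming. The only difference is presentational: the paper derives the infinite-horizon Bellman equation by truncating to horizon $T$, reversing time with the change of variable $W_t(m,s)=\beta^{-T+t-1}V_{T-t+2}(m,s)$, and passing to the limit via contraction, whereas you cite the standard infinite-horizon discounted MDP result directly.
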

\begin{proof}
 For any $T\in \mathbb{N}$,  define the finite-horizon counterpart of $J(g)$, given by \eqref{eq:total cost}, as follows:
\begin{equation}\label{eq:proof-finite-cost}
J_T(g):= \mathbb{E}^g\left[\sum_{t=1}^T \beta^{t-1} \frac{1}{n}\left( \sum_{i=1}^n c(x^i_t,u^i_t,m_t) + D(m_t,\theta_t) \right) \right].
\end{equation}
From \eqref{eq:theta} and Theorem~\ref{thm:markov-mf},
\begin{multline}
\Prob{m_{t+1},s_{t+1} \mid m_{1:t},s_{1:t},\gr_{1:t},\gd_{1:t}}\\ =\Prob{m_{t+1}|m_t,\gr_t,\gd_t}\ID{s_{t+1}=f(s_t)}.
\end{multline}
Therefore, $(m_{1:t},s_{1:t})$  is a controlled Markov process under reserve and demand  actions $(\gr_{1:t},\gd_{1:t})$. In addition, from Lemma~\ref{lemma:per-step}, the per-step is a function of $(m_t,s_t,\gr_t,\gd_t)$. It is well-known in Markov decision theory \cite{kumar2015stochastic} that in this case there exists a dynamic programming decomposition  to identify  a minimizer to  \eqref{eq:proof-finite-cost}, i.e., for $t \in \mathbb{N}_T$,
\begin{multline}
V_{t}(m,s):=\min_{\gr \in \mathbb{N}_k, \gd \in \mathbb{N}_k} \big( \ell(m,s,\gr,\gd)\\ +\sum_{\hat m \in \mathcal{M}} \Prob{\hat m|m,\gr,\gd} V_{t+1}(\hat m, f(s))\big),
\end{multline}
where $V_{T+1}(m,s):=0, \forall m \in \mathcal{M}, \forall s \in \mathcal{S}$.  Using the proof technique of \cite{kumar2015stochastic}, we make the following change of variable   for  any $m \in \mathcal{M},  s \in \mathcal{S}$ and any $ t \in \mathbb{N}_T$,
\begin{align}\label{eq:def-W-infinite}
W_t( m,s):=\beta^{-T+t-1} V_{T-t+2}( m,s),
\end{align}
where
$W_1( m,s)=\beta^{-T} V_{T+1}(m,s)=0.
$
Using simple algebraic manipulations, we arrive at
\begin{multline}
W_{T+1}(m,s)=\min_{gr \in \mathbb{N}_k, \gd \in \mathbb{N}_k} \big( 
\ell(m,s,\gr,\gd)+\\   \beta \sum_{\hat m \in \mathcal{M}} \Prob{\hat m|m,\gr,\gd} W_T( \hat m, f(s))
\big).
\end{multline}
Since the above Bellman operator is contractive~\cite{kumar2015stochastic},  its solution converges to a fixed-point solution as time goes to infinity, i.e.,
\begin{equation}\label{eq:V-infinite-mfs}
\lim_{T \rightarrow \infty} W_{T} =  W_{\infty}=:V. 
\end{equation}
\end{proof}
Since the mean-field and desired trajectory of demands  are known to all users, every user can independently solve \eqref{eq:value-inf} and find the optimal strategy in a distributed manner\footnote{There are various ways to  solve the Bellman equation~\eqref{eq:value-inf}  such as  value iteration and policy iteration~\cite{kumar2015stochastic}. }.  Each user constructs  its optimal  action~\eqref{eq:optimal_action} based on the obtained strategy,  private information $x^i_t$,  mean-field $m_t$, and the state of the desired trajectory, i.e. $s_t$.  As a result, each user will only need to know the mean-field rather than  the local information of other users. 
\begin{Remark}
\emph{  It is important to note that the cardinality of set $\mathcal{M}$ increases linearly with the number of users $n$ while  the cardinality of set $\mathcal{S}$ is independent of $n$. Therefore,   the computational complexity of  the solution of the Bellman equation~\eqref{eq:value-inf} increases linearly with~$n$, given the transition probability matrix\footnote{Binomial probability distribution function can be efficiently computed by different methods such as discrete Fourier transformation.} in Theorem~\ref{thm:bino}.  Note also that the computational  complexity of the Bellman equation~\eqref{eq:value-inf} increases exponentially with time, in general; however,  it is common practice to use  value iteration  to  find an $\varepsilon$-optimal solution in polynomial time,  where $\varepsilon$ converges  to zero exponentially~\cite{Bertsekas2012book}.}
\end{Remark}

According to the proof of Theorem~\ref{thm:bellman}, the pair $(m_t,s_t)$ evolves in a Markovian manner under reserve and demand actions. Hence, the Bellman equation of Theorem~\ref{thm:bellman} extends to the following case.
\begin{Corollary}\label{cor}
Let the dynamics  \eqref{eq:dynamics} and per-step costs \eqref{eq:per-step}  depend on the state of the desired load trajectory $s_t, t \in \mathbb{N}$. Then, the result of Theorem~\ref{thm:bellman} still holds, and for any $m \in \mathcal{M}$ and $s \in \mathcal{S}$,  the Bellman equation is given by
\begin{multline}
V(m, s):=\min_{\gr \in \mathbb{N}_k, \gd \in \mathbb{N}_k}\Big(\ell(m,s,\gr,\gd)+\\   \beta \sum_{\hat m \in \mathcal{M}} \Prob{\hat m|m,s,\gr,\gd} V(\hat m, f(s))\Big).
\end{multline}
\end{Corollary}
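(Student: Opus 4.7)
The plan is to mirror the proof of Theorem~\ref{thm:bellman} almost verbatim, after re-establishing its two Markovian pillars (Theorem~\ref{thm:markov-mf} and Lemma~\ref{lemma:per-step}) with the augmented dependence on $s_t$. Concretely, I would first verify that $(m_t, s_t)$ remains a controlled Markov process under the reserve and demand actions $(\gr_t, \gd_t)$, and that the per-step cost is a function of $(m_t, s_t, \gr_t, \gd_t)$; once these two facts are in hand, the standard contraction-mapping argument from Theorem~\ref{thm:bellman} applies without further modification and yields the stated Bellman equation.

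The first step is to redo Theorem~\ref{thm:markov-mf} when the transition probability~\eqref{eq:dynamics} and prices~\eqref{eq:per-step} additionally depend on $s_t$. I would start from the identity $n m_{t+1} = \sum_{i=1}^n \mathbb{1}(x^i_{t+1}=1)$ and condition on $(x^1_{1:t},\dots,x^n_{1:t}, s_{1:t}, \gr_{1:t}, \gd_{1:t})$. The conditional independence of $\{x^i_{t+1}\}_{i\in\mathbb{N}_n}$ is preserved because the new dynamics are still a pointwise function of $(x^i_t, u^i_t, m_t, s_t)$, and $s_{1:t}$ is a deterministic trajectory generated by $f$ from $s_1$ via~\eqref{eq:theta}. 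Hence the convolution identity in the style of~\eqref{eq:proof-x-m} still holds, with the Bernoulli success probabilities now indexed by $s_t$ as well. Marginalizing over the joint state history then gives
\begin{equation}
\Prob{m_{t+1}\mid m_{1:t}, s_{1:t}, \gr_{1:t}, \gd_{1:t}} = \Prob{m_{t+1}\mid m_t, s_t, \gr_t, \gd_t},
\end{equation}
which, together with $s_{t+1}=f(s_t)$, establishes the Markov property of $(m_t, s_t)$ under $(\gr_t, \gd_t)$.

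The second step is to re-derive Lemma~\ref{lemma:per-step} with $s_t$-dependent prices. The same splitting by $\mathbb{1}(x^i_t=0)$ and $\mathbb{1}(x^i_t=1)$ used in the original proof still collapses the per-user sum into $(1-m_t)\Cr(\gr_t, 1-m_t, s_t) + m_t \Cd(\gd_t, m_t, s_t)$, so we obtain a (new) function $\ell(m_t, s_t, \gr_t, \gd_t)$ absorbing both the price terms and $D(m_t, h(s_t))$.

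With these two pieces, the final step is essentially bookkeeping: define the finite-horizon cost $J_T(g)$ as in~\eqref{eq:proof-finite-cost}, write down the backward dynamic programming recursion for $V_t(m,s)$ on the Markov state $(m,s)$ with the transition $\Prob{\hat m \mid m, s, \gr, \gd}\,\mathbb{1}(s'=f(s))$, perform the time-reversal change of variable $W_t(m,s) = \beta^{-T+t-1} V_{T-t+2}(m,s)$, and invoke contraction of the Bellman operator to pass to the infinite-horizon limit. I expect the main (and only mild) obstacle to be the Markov verification in the first step: one must be careful that the additional conditioning on $s_{1:t}$ does not leak information about the $x^i_t$'s beyond what $m_t$ and $s_t$ already carry, which is ensured precisely because $s_t$ is an exogenous deterministic signal under~\eqref{eq:theta}. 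The rest of the argument is a routine replay of Theorem~\ref{thm:bellman}.
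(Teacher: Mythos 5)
Your proposal is correct and follows exactly the route the paper intends: the paper gives no separate proof for Corollary~\ref{cor}, justifying it only by the remark that $(m_t,s_t)$ remains a controlled Markov process under the reserve and demand actions so that the argument of Theorem~\ref{thm:bellman} carries over, which is precisely what you verify in detail. Your added care about the exogenous, deterministic nature of $s_{1:t}$ in the conditioning step is a sensible elaboration of the same idea rather than a different approach.
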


\section{Numerical Results}\label{sec:numerical}
 Motivated by  peak-load management,  we  present a numerical example to illustrate our results. 

\textbf{Example 1.}  Denote by $T \in \mathbb{N}$ the number of equal control intervals within a full day ($24$ hours), and suppose that the ISO applies the  control strategy at the beginning of each interval, i.e.,   it  intervenes   every $\frac{24 \times 60}{T}$ minutes.
For any time  $t \in \mathbb{N}$,   define \emph{daily control time instants}  by the following modulo-operation:
\begin{equation}
s_t:=\begin{cases}
t- \floor{\frac{t}{T}}T, & \frac{t}{T} \neq \floor{\frac{t}{T}},\\
T, & \frac{t}{T} = \floor{\frac{t}{T}}.
\end{cases}
\end{equation}
 For example, let $T=100$ and $t=240$, then $s_{240}$ refers to the control interval $40$ in day $3$.
From the above definition,  $s_t \in \mathbb{N}_T, t \in \mathbb{N}, $ evolves as follows:
\begin{equation}
s_{t+1}=\ID{s_t <T} (s_t+1) +\ID{s_t=T}, 
\end{equation} 
where  the initial value $s_1=1$.
Let $\tau_B < T$ and $\tau_E < T$ denote respectively  the  first and  last  daily control  time instants of  the peak-load period.  The desired load pattern trajectory is valley-shaped  as follows:
\begin{equation}
\theta_{t}=\begin{cases}
0.8, &  1 \leq s_t < \tau_B,\\
0.8 - 0.6 sin(\dfrac{s_t - \tau_B}{\tau_E-\tau_B} \pi), & \tau_B \leq  s_t < \tau_E,\\
0.8, &    \tau_E \leq s_t \leq T.
\end{cases}
\end{equation} 
 Suppose  the ISO has the following three options ($ k=3$) at each time $t \in \mathbb{N}$.\footnote{The options can be extended to  multiple  electricity-providers with various price, incentive, and delivery rates.}

\textbf{Basic option}: This  option, denoted by $u=1$, refers to the case where the electricity is provided by the main generators. In this option:
\begin{itemize}
\item There is no incentive for users to delay their demands, i.e., participation rate $\alpha(1)$ is $0$.

\item  The  demands are served at rate $q(1,m_t)=0.2$.

\item The per-user price may  increase with respect to the number of served users.  Let the price functions  be $\Cr(1,1-m_t)=1+ (1-m_t)$ and $\Cd(1,m_t)=1.5+1.5 m_t$.
\end{itemize}

\textbf{Ancillary option}: This  option, denoted by $u=2$, refers to the case where  ancillary (back-up) generation and reserve resources are  integrated  to the grid. In this option:
\begin{itemize}
\item There is no incentive for users to delay their demands, i.e., participation rate $\alpha(2)$ is $0$.

\item The  demands are served at a higher rate than that of basic option. Let  $q(2,m_t)=0.4$. 

\item  Let the price of electricity  be  flat as follows: $\Cr(2,1-m_t)=2$ and $\Cd(2,m_t)=3$.
\end{itemize}

\textbf{Incentive-based option}: This  option, denoted by $u=3$, refers to the case where ISO offers monetary discounts to users for incentivizing them to cooperate with ISO. If a user has no demand at time $t$,  it receives  $0.05$ discount unit  as an incentive  for not making a demand at $t+1$. If a user has a demand  at time $t$,  in addition to  $0.05$ unit discount for  not sending a demand at $t+1$, it receives an extra $0.05$ unit discount compared to the basic option for  letting the ISO serve its current demand with more delay (lower delivery rate). In this option: 
\begin{itemize}
\item The participation rate $\alpha(3)$ is assumed to be $0.85$.

\item Let the delivery rate be given by  $q(3,m_t)=0.15 \leq q(1,m_t), m_t \in \mathcal{M}$. 

\item  The discounted prices are assumed to be $\Cr(3,1-m_t)=\Cr(1,1-m_t)-0.05$ and $\Cd(3,m_t)=\Cd(1,m_t)-0.1$.
\end{itemize}
%
\begin{figure}[t!]
\centering
\includegraphics[ trim={0cm 5.5cm 0 4.5cm},clip,width=\linewidth]{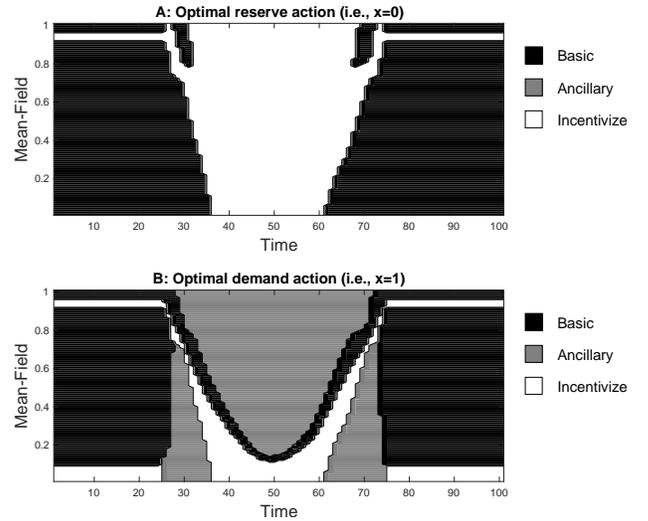}
\caption{The optimal strategy is shown as a function of local state $x$, mean-field $m$, and daily control time instants  $s$ in Example 1.  The peak-load period occurs between  daily control time instants  $25$ and $75$.  The three options (basic, ancillary, incentive-based) are depicted by three different colors.}\label{fig2}
\end{figure}

Consider the following parameters: number of users $n=100$,  demand probability $p=0.8$, discount factor $\beta=0.9$, control  intervals $\tau_B=25$, $\tau_E=75 $, and scaled Euclidean distance function $D(m,\theta)=100|m-\theta|, m \in \mathcal{M}, \theta \in [0,1]$.  The ISO is interested to keep the distribution of demands close to the desired trajectory by imposing minimum price on users. From Theorem~\ref{thm:bellman},   the optimal strategy  can be obtained by solving the  Bellman equation~\eqref{eq:value-inf}.  According to \eqref{eq:optimal_action}, the optimal action of user $i \in \mathbb{N}_n$ at time $t \in \mathbb{N}$ depends on state $x^i_t$, mean-field $m_t$, and the state of the desired trajectory~$s_t$ (i.e., daily control time instants).  The optimal strategy is displayed in Figure~\ref{fig2}.
\begin{figure}[h]
\centering
\scalebox{1.05}{
\includegraphics[trim={2cm 5cm 0 5cm},clip, width=\linewidth]{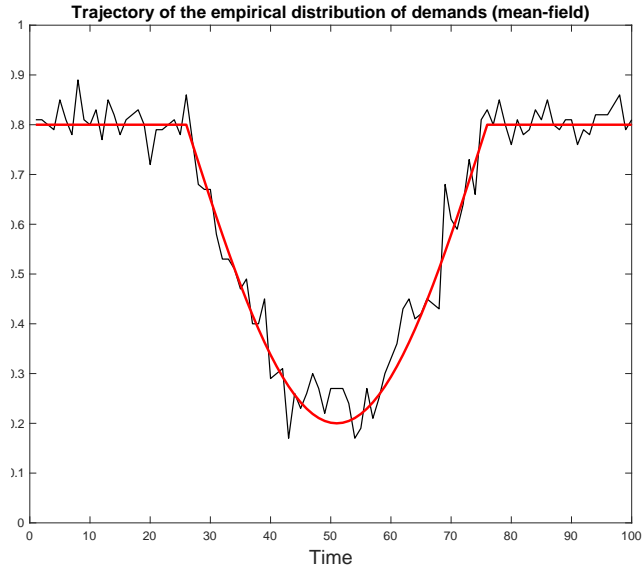}}
\caption{ The  ISO aims  to keep the distribution of demands (mean-field) in Example 1 close to a  valley-shaped trajectory shown in red.  A sample-path of the mean-field under the optimal  strategy is displayed in black. }\label{fig3}
\end{figure}

In Figure~\ref{fig3}, a sample-path (trajectory) of mean-field (empirical distribution of demands) is depicted.  Prior to the peak-load time,  the number of demands is around $80$ and ISO uses  option $1$ (basic option) for both reserve and demand actions according to Figure~\ref{fig2}. In the peak-load time,  the ISO mostly  uses option $3$ (incentive-based  option) for users that have no demand at the moment to encourage them not to make a demand in the future. However, for users that have already requested power (i.e.,  have an active demand at the moment),  the ISO  uses option $2$ (ancillary option) in combination with the other two options in order to keep the distribution of demands close to the desired trajectory with the minimum price.


\section{Conclusions}\label{sec:concolution}
We proposed a stochastic model for binary demands in  demand-side management. Unlike many existing models,  here the state and action spaces  are  discrete  and  the dynamics and cost  are not necessarily  smooth or convex; hence,  traditional consensus-like approaches may not be as effective. The information structure is non-classical, and the computational complexity of finding a solution increases exponentially with the number of users, in general. We identified  an optimal solution by the Bellman equation. Since the state space of the Bellman equation increases  linearly with the number of users, the computational complexity of the proposed solution is linear with respect to the number of users. We then presented a numerical example for peak-load management to support our results.
\bibliography{MFT_Ref}
\bibliographystyle{IEEEtran}
 
\end{document}